\documentclass[a4paper]{scrartcl}

\usepackage{amsfonts,amsthm,amssymb,amsmath}
\usepackage[latin1]{inputenc}
\usepackage[T1]{fontenc}
\usepackage{graphicx}
\usepackage[inline]{enumitem}
\usepackage{hyperref}
\usepackage{tikz}
\usepackage{tkz-berge}

\newtheorem{thm}{Theorem}%[section]
\newtheorem{lem}[thm]{Lemma}

\newtheorem{cor}[thm]{Corollary}

\theoremstyle{definition}

\newcommand{\id}{\mathrm{id}}

\newcommand{\br}{\operatorname{br}}
\newcommand{\gr}{\operatorname{gr}}

\title{Firefighting on trees and Cayley graphs}
\author{Florian Lehner \thanks{The author was supported by the Austrian Science Fund (FWF) Grant no. J 3850-N32}}

\begin{document}
\maketitle

\begin{abstract}
We study Hartnell's firefighter problem on infinite trees and characterise the branching number in terms of the firefighting game. Using our results about trees, we give a partial answer to a question of Mart\'inez-Pedroza concerning firefighting on Cayley graphs.
\end{abstract}

\section{Introduction}

In 1995 Hartnell~\cite{hartnelltalk} introduced the firefighting game which can be described as follows. Before the first round of the game, an antagonist sets some subset of the vertices of a graph $G$ on fire. Then, in each round $n$, we can protect $f_n$ vertices whereafter the fire spreads to all unprotected neighbours of burning vertices. Once a vertex is burning or protected, it remains in that state for the rest of the game. This can for example be seen as a model for the spread of a perfectly contagious disease with no cure, see~\cite{grids-devlinhartke}. The act of protecting vertices at each time step, could then be viewed as vaccinations.

There are several different goals that we might want to pursue, e.g.\ minimise number of rounds or number of burnt vertices, or save a certain set of vertices or a given fraction of the vertices from being burnt.  The survey paper~\cite{ffsurvey} gives an overview on different lines of research concerning the firefighting game.

In this paper we will focus on the question of containment. We say that a fire can be contained on an infinite graph if we can prevent it from spreading to infinitely many vertices. An infinite graph $G$ satisfies $f_n$-containment, if any finite initial fire can be contained by protecting $f_n$ vertices in round $n$.

Containment was first studied in grids, the first results being that certain planar grids satisfy constant containment, i.e. containment for $f_n \equiv c$, see~\cite{grids-devlinhartke,3gridmessinger}. Develin and Hartke~\cite{grids-devlinhartke} showed that higher dimensional square grids do not satisfy constant containment. However, it is easy to see that they satisfy $f_n$-containment for some polynomial $f_n$. In fact, Dyer, Mart\'inez-Pedroza, and Thorne~\cite{coarsegeometry} showed that every graph with polynomial growth of degree $d$ has the $c\cdot n^{d-2}$-containment property for some constant $c$.

We study the question of exponential containment. We say that a graph satisfies exponential containment of rate $\lambda$ if it satisfies $f_n$-containment for some $f_n = O(\lambda^n)$. It is easy to see that for every graph $G$ there is a threshold $\lambda_c$ such that for every $\lambda > \lambda_c$ it satisfies $\lambda$-containment whereas for $\lambda < \lambda_c$ it doesn't.

We prove that if $T$ is a tree, then $\lambda_c$ coincides with the branching number $\br T$ of this tree (see the next section for a definition). It is worth noting that the branching number also shows up as a threshold in different problems. It marks the transition from transience to recurrence of the homesick random walk on a tree and $\frac{1}{\br T}$ is the percolation threshold on an infinite tree, see~\cite{branchingnumberlyons}. This naturally leads to the question if his is coincidence or there is a deeper connection between firefighting and random processes on graphs.

As an application of our results about trees we make progress towards a question of Mart\'inez-Pedroza~\cite{groups}. He showed that Cayley graphs of non-amenable groups do not have the polynomial containment property and asked whether polynomial containment always implies polynomial growth for Cayley graphs. We show that for a Cayley graph with exponential growth of rate $\alpha$ we have $\lambda_c = \alpha$. This implies that Cayley graphs of exponential growth can never satisfy polynomial containment, only leaving open the notoriously difficult case of groups with intermediate growth. 

\section{Preliminaries}
\label{sec:prelim}

Throughout this paper $G = (V,E)$ denotes a graph with vertex set $V$ and edge set $E$. All graphs considered will be connected and locally finite.

The firefighting game is defined as follows: Let $G$ be an infinite graph and let $(f_n)_{n \in \mathbb N}$ be a sequence of integers. Before the first round, a finite set $X_0$ of vertices of $G$ are defined as burning. In round $n$, the player can pick $f_n$ vertices which are not burning to mark as protected. Afterwards every unprotected vertex which is adjacent to a burning vertex is marked as burning. Note that once a vertex is marked as burning or protected, it remains in that state until the end of the game.

The player wins the game, if after finitely many rounds no new vertices are marked as burning---in this case we say that the fire is \emph{contained}. A \emph{containment strategy} for an initial fire $X_0$ is a map $s$ from $\mathbb N$ to the power set of $V$, where $|s(n)|\leq f_n$ such that marking all vertices in $s(n)$ as protected in round $n$ leads to containment.

A graph satisfies \emph{$f_n$-containment}, if there is a containment strategy (with respect to the sequence $f_n$) for any initial fire $X_0$. A graph $G$ satisfies \emph{exponential containment of rate $\lambda$} if there is $f_n = O(\lambda^n)$ such that $G$ satisfies $f_n$-containment. Clearly, if $G$ satisfies exponential containment of rate $\lambda$, then it also satisfies exponential containment of any rate $\lambda' > \lambda$. Hence there is a critical rate $\lambda_c$ such that for $\lambda < \lambda_c$, the graph $G$ does not satisfy exponential containment of rate $\lambda$, whereas for $\lambda > \lambda_c$ it does.

Let $r \in V$ and assume that $G$ is rooted at $r$. For a vertex or edge $x$ denote by $|x|$ the length of a shortest path containing both $r$ and $x$. Define the \emph{ball of radius $k$ with center $r$} by $B_r(k) = \{v \in V \mid |v| \leq k\}$. 

The \emph{(exponential) growth rate} of a graph is defined by $\gr G = \lim _{k \to \infty} \left( |B_r(k)| \right)$ if the limit exists. Note that if the growth rate exists, then it does not depend on the base point $r$.

For a tree $T$ the \emph{branching number} $\br T$ provides another measure for growth. It was first studied by Furstenberg~\cite{branchingnumberfurstenberg}, and later formally defined by Lyons~\cite{branchingnumberlyons} who pointed out its close connections to random walks and percolation on trees. For a tree $T$ rooted at $r$ define
\[
\br T := \sup \left\{\lambda \mid \exists \text{ non-zero flow }\theta \text{ from } r \text{ to }\infty\text{ s.t.\ } \theta(e)= \lambda^{-|e|}\right\}.
\]

By a variant of the well known max-flow min-cut theorem we get the following equivalent definition:
\[
	\br T = \sup \left\{ \lambda \mid \inf_{\Pi} \sum _{e \in \Pi} \lambda ^{-|e|} > 0\right\},
\]
where the infimum runs over all cutsets $\Pi \subseteq E$ whose removal leaves the root $r$ in a finite component.

\section{Trees}
\label{sec:trees}

In this section we determine the critical rate $\lambda_c$ for exponential containment on trees. It turns out that $\lambda_c$ equals the branching number. Hence, our main theorem can be used to define the branching number in terms of the firefighter game. We first prove two lemmas which tell us that in order to show containment for a tree $T$ it suffices to study a very restricted set of strategies.

\begin{lem}
Let $T$ be a tree rooted at $r$. Then $T$ satisfies $f_n$-containment if and only if there is a containment strategy for each $B_r(k), k \in \mathbb N$.
\end{lem}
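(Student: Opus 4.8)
The plan is to prove the nontrivial direction by a monotonicity (coupling) argument, since the forward implication is immediate: if $T$ satisfies $f_n$-containment, then in particular every ball $B_r(k)$---which is finite because $T$ is locally finite---admits a containment strategy, as $B_r(k)$ is itself a finite initial fire.

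For the converse, I would start from an arbitrary finite initial fire $X_0$. Since $T$ is connected, every vertex lies at finite distance from $r$, so the finite set $X_0$ is contained in some ball $B_r(k)$. By hypothesis there is a containment strategy $s$ for the initial fire $B_r(k)$, and I claim that the \emph{same} sequence of protection sets $s(1), s(2), \dots$ also contains the fire when the initial fire is merely $X_0$. Writing $B_n(Y)$ for the set of burning vertices after round $n$ when the initial fire is $Y$ and the protection sets $s(i)$ are used, the core assertion is the monotonicity principle $X_0 \subseteq B_r(k) \implies B_n(X_0) \subseteq B_n(B_r(k))$ for every $n$. Intuitively, a smaller initial fire burns a subset of what a larger one does under identical play; this coupling is what lets containment of the larger fire certify containment of the smaller one.

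The monotonicity is proved by induction on $n$, and checking it carefully is the only real subtlety. The base case is the inclusion $X_0 \subseteq B_r(k)$ itself. For the inductive step I would first dispose of a legality point: the protection set $s(n)$ consists of vertices that are not burning in the $B_r(k)$-game at the start of round $n$, i.e.\ vertices outside $B_{n-1}(B_r(k))$; by the inductive hypothesis these vertices also lie outside $B_{n-1}(X_0)$, so $s(n)$ is an admissible protection move in the $X_0$-game as well. Granting this, any vertex newly ignited in the $X_0$-game is an unprotected neighbour of some $u \in B_{n-1}(X_0) \subseteq B_{n-1}(B_r(k))$, hence is burning (either newly or already) in the $B_r(k)$-game; this yields $B_n(X_0) \subseteq B_n(B_r(k))$ and closes the induction.

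Finally I would conclude as follows. Since $s$ contains the fire $B_r(k)$, the increasing sets $B_n(B_r(k))$ stabilise after finitely many rounds to a finite set. The sets $B_n(X_0)$ are likewise increasing in $n$ and, by the monotonicity just established, bounded above by this finite set, so they too stabilise after finitely many rounds; hence $s$ contains the fire started at $X_0$. As $X_0$ was an arbitrary finite fire, $T$ satisfies $f_n$-containment. The main obstacle, such as it is, is the bookkeeping around the ``protect only non-burning vertices'' rule---namely verifying that the borrowed strategy remains admissible for the smaller fire---rather than any deep difficulty; note also that the argument uses nothing about the tree structure beyond local finiteness and connectedness.
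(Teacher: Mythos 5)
Your proof is correct and takes essentially the same approach as the paper: the forward direction is immediate because each ball $B_r(k)$ is itself a finite initial fire, and the converse rests on the monotonicity coupling $X_0 \subseteq B_r(k) \implies B_n(X_0) \subseteq B_n(B_r(k))$, so a containment strategy for the larger fire also contains the smaller one. You are in fact slightly more careful than the paper, which asserts the inclusion without the inductive verification or the admissibility check.
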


\begin{proof}
The forward direction is trivial: if there is a winning strategy for every finite set, then there is a winning strategy for every $B_r(k)$. Conversely, if $X_0$ is any finite set, then there is some $B_r(k)$ such that $X_0 \subseteq B_r(k)$. So the vertices on fire at step $n$ for starting set $B_r(k)$ is a subset of the vertices on fire for starting set $X_0$. Hence a winning strategy for $B_r(k)$  is also winning for $X_0$.
\end{proof}

Let $V' \subseteq V$ be a finite set of vertices. We can define a strategy $s(V')$ by $n \mapsto S_n$, where $S_n$ is the set containing the $f_n$ vertices in $V'$ that are closest to the root and neither burning nor protected.

\begin{lem}
Let $T$ be a tree rooted at $r$ and assume that the set of vertices initially on fire is $B_r(k)$. If there is a containment strategy, then there is a containment strategy of the form $s(V')$.
\end{lem}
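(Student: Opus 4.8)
The plan is to show that any containment strategy can be transformed into one of the form $s(V')$ without losing the containment property. The key insight is that when the initial fire is a ball $B_r(k)$, the tree structure makes the fire spread outward along rays, and protecting a vertex effectively cuts off the entire subtree beyond it. So the only thing that matters about a strategy is \emph{which} subtrees it eventually cuts off, not the precise timing or location of individual protections within each round.

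First I would fix a containment strategy $s$ for $B_r(k)$ and let $V'$ be the (finite) set of all vertices ever protected by $s$ over the course of the game; since containment happens after finitely many rounds and each round protects finitely many vertices, $V'$ is finite. The claim will be that the greedy strategy $s(V')$, which in each round protects the $f_n$ not-yet-burning, not-yet-protected vertices of $V'$ closest to the root, also contains the fire. Intuitively, $s(V')$ protects exactly the same pool of vertices as $s$, just in a closest-to-the-root-first order, and in a tree protecting a vertex closer to the root is at least as good because it shields everything behind it.

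The main step is an exchange/monotonicity argument. I would compare the two games round by round and maintain an invariant of the form: at the end of round $n$, the set of vertices protected by $s(V')$ that lie on the ``fire frontier'' dominates those protected by $s$, in the sense that every vertex $s$ has protected is either already protected by $s(V')$ or lies in a subtree already cut off by an $s(V')$-protected ancestor. The crucial fact, special to trees, is that a vertex $v$ becomes burning only if its parent is burning and $v$ is unprotected; hence if a vertex closer to the root than $v$ (on the path from $r$ to $v$) is protected, then $v$ never catches fire and protecting $v$ itself would have been wasted. This is why prioritising root-closest vertices can only help: any protection $s$ spends ``deep'' in a subtree that $s(V')$ has already severed is redundant, freeing $s(V')$ to protect elsewhere.

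The hard part will be making the invariant precise and verifying it survives a single round, because the two strategies may protect genuinely different vertices at intermediate times, so the fire sets need not be nested at every step — one must track how the set of burning vertices in $s(V')$ relates to that in $s$ and argue that $s(V')$ is never worse. I would handle this by induction on the round number, and within the inductive step by a small exchange argument showing that if $s(V')$ protects a root-closest available vertex $u$ while $s$ protects some $w$ in the subtree below $u$, we may as well assume $s$ also protected $u$ (or a vertex on the $r$–$u$ path), since otherwise that branch burns under $s$ too, contradicting containment. Once the invariant is established for all rounds, finiteness of $V'$ guarantees $s(V')$ exhausts its pool and, since $s$ contained the fire using only vertices from $V'$, so does $s(V')$.
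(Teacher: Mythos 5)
There is a genuine gap, and it sits right at the foundation of your argument: taking $V'$ to be the set of \emph{all} vertices ever protected by $s$ does not work. The problem is that $s$ may protect \emph{redundant} vertices---vertices lying in a subtree that an earlier protection has already severed---and it may do so at arbitrarily late rounds, since such vertices are under no deadline. The greedy strategy $s(V')$, by contrast, is \emph{forced} to play these redundant vertices early: they are close to the root and never catch fire, so they stay in the pool and are picked ahead of genuinely critical vertices that lie farther out on other branches. Contrary to what you write, redundant protections do not ``free $s(V')$ to protect elsewhere''; by definition $s(V')$ only ever plays the root-closest available vertices \emph{of $V'$}. Concretely: let $k=0$, $f_n\equiv 1$, let $r$ have children $a$ and $b$, let $a$ have two children $u_1,u_2$ at distance $2$, and let $b$ have a single child $b_1$ whose single child $w$ (at distance $3$) starts an infinite ray. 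The strategy $s$ that plays $a,\,u_1,\,w,\,u_2$ in rounds $1$--$4$ contains the fire with final burnt set $\{r,b,b_1\}$, so your $V'=\{a,u_1,u_2,w\}$. But $s(V')$ must play $a$ in round $1$ and then $u_1,u_2$ (distance $2$, unburnt behind $a$) in rounds $2$ and $3$, while the fire marches $r\to b\to b_1\to w$ and reaches $w$ at the end of round $3$; the fire then escapes along the ray below $w$. Your intuition that ``closer to the root is at least as good'' is only valid when comparing a vertex with its own descendants, not across branches.

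The paper avoids this by choosing $V'$ differently: it is the frontier of the final burnt set $X_F$ of $s$, i.e.\ the unburnt vertices with a burnt neighbour. This set contains exactly one vertex on each ray from $r$, so it has no redundancies, and each such vertex must in fact have been protected by $s$ before the fire reached its level (its parent burns). This yields the counting bound $|\{v\in V'\colon |v|\le n\}|\le\sum_{i=1}^{n-k}f_i$ for all $n$, which shows that $s(V')$ protects every vertex of $V'$ in time regardless of tie-breaking; no round-by-round exchange or invariant is needed. Even apart from the wrong choice of $V'$, your proposal never actually formulates or verifies the invariant it relies on---you describe what it should say and acknowledge that making it precise is ``the hard part''---so the argument is incomplete as well as based on a false claim. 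A repair along your lines would at minimum require pruning $V'$ to the minimal (root-closest) protected vertex on each ray, at which point you essentially recover the paper's construction.
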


\begin{proof}
Let $s$ be any successful containment strategy for starting set $B_r(k)$. Let $X_F$ be the final set of vertices on fire after the successful containment strategy is played. Let $V'$ be the set of vertices in $V \setminus X_F$ which have a neighbour in $X_F$ and denote by $s'$ the strategy defined as above. Note that since $T$ is a tree, $V'$ contains exactly one vertex on every ray starting at $r$. Now if a vertex $v_0 \in V'$ was on fire before it is played in $s'$, then
\[
	|\{v \in V' \colon |v| < n\}| > \sum _{i=1}^{n-k} f_i
\]
where $n = |v_0|$. This means that $s$ can't protect all vertices in $\{v \in V' \colon |v| < n\}$ before step $n$. But then $X_F$ can't be the set of vertices on fire after $s$ is played: since $T$ is a tree, all vertices at distance $n$ are on fire after $k-n$ steps, unless a vertex of the unique path from $r$ has been played before.
\end{proof}

We are now ready to prove the main theorem of this paper.

\begin{thm}
\label{thm:trees}
If $T$ is an infinite, locally finite tree, then $\lambda_c =\br T$.
\end{thm}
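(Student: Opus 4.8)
The plan is to prove both inequalities $\lambda_c \le \br T$ and $\lambda_c \ge \br T$ by translating the game, via the two lemmas, into statements about the weights $\sum_{v\in V'}\mu^{-|v|}$ of vertex cutsets $V'$, and then comparing with the cutset formulation of the branching number. The starting point is the reformulation implicit in the proof of Lemma 2: writing $F_m=\sum_{i=1}^m f_i$ and $N_{V'}(d)=|\{v\in V':|v|\le d\}|$, the strategy $s(V')$ contains the fire $B_r(k)$ if and only if $N_{V'}(d)\le F_{d-k}$ for every $d$. Combining this with Lemmas~1 and~2, $T$ satisfies $f_n$-containment if and only if for every $k$ there is a cutset $V'$ lying entirely outside $B_r(k)$ with $N_{V'}(d)\le F_{d-k}$ for all $d$.

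For the upper bound $\lambda_c\le\br T$ I would fix $\lambda>\br T$ and choose $\mu\in(\br T,\lambda)$, so that $\inf_\Pi\sum_{e\in\Pi}\mu^{-|e|}=0$. Using that $\br T_v\le\br T<\mu$ for every vertex $v$ --- so that every subtree also admits cutsets of arbitrarily small $\mu$-weight, via the scaling $\sum_{e\in\Pi_v}\mu^{-|e|}=\mu^{-|v|}\sum_{e\in\Pi_v}\mu^{-|e|_v}$ --- one can push cutsets outward and obtain, for each $k$ and each $\eta>0$, a cutset $V'$ outside $B_r(k)$ with $\sum_{v\in V'}\mu^{-|v|}\le\eta$. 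Such a weight bound forces $N_{V'}(d)\le\eta\mu^d$. Taking $f_n=\lceil\lambda^n\rceil$ (so $f_n=O(\lambda^n)$) and $\eta$ small enough in terms of $k$, the gap $\lambda>\mu$ yields $\eta\mu^d\le F_{d-k}$ for all $d>k$, while both sides vanish for $d\le k$; hence $s(V')$ wins for $B_r(k)$. Since the same sequence $f_n$ serves every $k$, this establishes $\lambda$-containment.

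The harder direction is $\lambda_c\ge\br T$, and here lies the main obstacle: a single sparse cutset is perfectly compatible with $\inf_\Pi\sum_e\mu^{-|e|}>0$, so controlling the weight of one cutset is not enough. The resolution is to exploit that containment must succeed for \emph{every} initial ball $B_r(k)$ simultaneously. Fixing $\lambda<\br T$ and $\mu\in(\lambda,\br T)$ gives $\delta:=\inf_\Pi\sum_e\mu^{-|e|}>0$. Assuming $\lambda$-containment with $f_n\le C\lambda^n$, for each $k$ the reformulation yields a winning cutset $V'_k$ outside $B_r(k)$ with $N_{V'_k}(d)\le F_{d-k}\le C'\lambda^{d-k}$ and $N_{V'_k}(d)=0$ for $d\le k$. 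Bounding the number of level-$d$ vertices crudely by $N_{V'_k}(d)$ and summing the geometric series $\sum_{d>k}(\lambda/\mu)^d$, the inequality $\lambda<\mu$ gives $\sum_{v\in V'_k}\mu^{-|v|}\le C''\mu^{-k}$. Letting $k\to\infty$ produces cutsets of $\mu$-weight below $\delta$, contradicting $\delta>0$; hence no such $f_n$ exists.

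Combining the two directions yields $\lambda_c=\br T$. The points that will need care are the exact form of the count-versus-budget inequality (strict versus non-strict, and the off-by-one in the arrival time of the fire at distance $d$) and the outward-pushing argument in the upper bound; but these are routine once the reformulation above is in place. The conceptual core is the ``for all $k$'' trick in the lower bound, which converts the single hypothesis $\sum_{v\in V'_k}\mu^{-|v|}=O(\mu^{-k})$ into cutsets of vanishing weight.
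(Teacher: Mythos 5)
Your proposal is correct and follows essentially the same route as the paper: reduce to initial fires $B_r(k)$ and to the greedy cutset strategies $s(V')$, translate containment into the counting inequality $N_{V'}(d)\le F_{d-k}$, and play this off against the cutset characterisation of $\br T$ via an intermediate exponent $\mu$ and a geometric series. The only cosmetic differences are that in the upper bound the paper sidesteps your ``pushing outward'' step by choosing $\epsilon$ so small that the cheap cutset automatically avoids $B_r(k)$, and in the lower bound you carry the constant from $f_n\le C\lambda^n$ explicitly rather than first reducing to $f_n=\lfloor\lambda^n\rfloor$.
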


\begin{proof}
We first show that $\lambda_c \leq \br T$. Clearly it suffices to show that for every $\lambda > \br T$ there is a successful containment strategy with $f_n = \lfloor \lambda ^n \rfloor$. Hence let $\lambda > \br T$ and assume that the starting set is $B_r(k)$. Since
\[
\inf_\Pi \sum_{e \in \Pi} \lambda ^{-|e|} = 0,
\]
we can pick a cutset $\Pi$  whose removal leaves $r$ in a finite component such that 
\[
\sum_{e \in \Pi} \lambda ^{-|e|} < \epsilon,
\]
where $\epsilon$ is chosen in a way that $\epsilon \cdot  \lambda ^n  < \lfloor \lambda^{n-k} \rfloor$.
Let $V'$ be the set containing the endpoint of each $e \in \Pi$ which is further away from $r$. Let $V_n' := \{v \in V' \colon |v| = n\}$. Then
\[
	\epsilon \geq \sum_{e \in \Pi} \lambda ^{-|e|} = \sum_{v \in V'} \lambda ^{-|v|} > \sum_{v \in V_n'} \lambda ^{-|v|} = |V_n'| \cdot \lambda ^{-n},
\]
whence
\[
	|V_n'| \leq \epsilon \cdot \lambda ^n < \lfloor \lambda^{n-k}\rfloor.
\]
This implies that we can play the set $V_n'$ at step $n-k$ (i.e.\ before the fire reaches level $n$). Hence the fire is contained below $V'$ and the strategy is successful.

To show that $\lambda_c \geq \br T$ it suffices to show that there is no containment strategy for $f_n = \lfloor \lambda^n \rfloor$ for any $\lambda < \br T$. Indeed, this implies that there is no containment strategy for $f_n = o(\lambda^n)$, and since we can choose $\lambda$ arbitrarily close to $\br T$, it follows that $\lambda_c \geq \br T$.

Hence let $\lambda < \br T$. Let $C$ be a constant such that 
\[
\sum_{i=1}^n \lfloor \lambda^i \rfloor \leq C \cdot \lambda ^n.  
\]
Choose $\mu$ such that $\lambda < \mu < \br T$. Note that since $\mu < \br T$, there is some $\epsilon > 0$ such that for every cutset $\Pi$ we have 
\[
	\sum_{e \in \Pi} \mu^{-|e|} > \epsilon.
\]
Finally let $k$ be such that
\[
	C \cdot \sum_{i=k+1}^\infty \left( \frac \lambda \mu \right)^i < \epsilon.
\]

We now claim that with $k$ chosen as above, there is no successful containment strategy for $X_0 = B_r(k)$. Assume there was one, then there is one of the form $s(V')$. Since $s(V')$ is assumed to be a containment strategy, removing $V'$ from $G$ leaves $r$ in a finite component. Let $\Pi \subseteq E$ be the set containing for every $v' \in V'$ the first edge of the path from $v'$ to $r$. Then $\Pi$ is a cutset whose removal leaves $r$ in a finite component. 

Let $V_n' :=\{v \in V' \colon |v| = n\}$. Since our strategy is successful we know that every vertex in $V'$ is played before it catches fire. In particular
\[
	|V_n'| \leq \sum _{i=1}^{n-k}\lfloor \lambda^i \rfloor \leq C \cdot \lambda ^n.
\]
Furthermore $|V_n'| = 0$ for $n \leq k$ because we can never play any vertex which is initially on fire. Putting all of the above together we get
\[
	\epsilon 
    < \sum_{e \in \Pi} \mu^{-|e|} 
    = \sum_{v \in V'} \mu ^{-|v|} 
    = \sum_{i=k+1}^{\infty} \sum_{v \in V_i} \mu^{-i} 
    \leq \sum_{i=k+1}^{\infty} C \cdot \lfloor \lambda ^i \rfloor \cdot \mu^{-i}  
    \leq C \cdot \sum_{i=k+1}^\infty \left( \frac \lambda \mu \right)^i 	< \epsilon,
\]
which is a contradiction.
\end{proof}

\section{Cayley graphs}
\label{sec:cayley}

In this section we use the main result of the previous section as well as some known results about Cayley graphs to determine the exponential containment threshold $\lambda_c$ for Cayley graphs.

For this purpose we need the following definition. Let $T$ be a tree rooted at $r$. For a vertex $v$ define $T_v$ to be the subtree of $T$ rooted at $v$, i.e.\ let $e$ be the first edge of the unique path from $v$ to $r$, then $T_v$ is the component of $T - e$ which contains $v$ (rooted at $v$). The tree is called subperiodic, if there is $k \in \mathbb N$ such that for every $v$ there is $v'$ with $|v'| \leq k$ and $T_v$ embeds into $T_{v'}$ as a subtree in a way that maps $v$ to $v'$.

Furstenberg~\cite{subperiodicfurstenberg} showed that for a subperiodic tree the growth rate exists and coincides with the branching number, see \cite{probtreeslyonsperes} for a graph theoretic proof.

Let $\Gamma$ be a group and let $G$ be a Cayley graph of $\Gamma$ with respect to the generating set $\{x_1,\dots, x_k\}$. The following construction due to Lyons~\cite{growthrwlyons} gives a subperiodic spanning tree of $G$ with the same exponential growth rate as $G$:
For every $v \in \Gamma$ there is a unique word $[v] = (x_{i_1},\dots,x_{i_l})$ such that 
\begin{itemize}[label=-]
\item $x_{i_1}\cdots x_{i_l} = v$
\item $l$ is the distance from $v$ to $\id$ in $G$, and 
\item $[v]$ is lexicographically minimal among all words with the first two properties.
\end{itemize}
Now the graph with vertex set $\Gamma$ and an edge from $v$ to $w$ if $[v]$ is an extension of $[w]$ by one letter (or vice versa) is easily seen to be a subperiodic spanning tree, rooted at $\id$.

From this we can now deduce the following result.
\begin{thm}
Let $G$ be a Cayley graph of a group with exponential growth rate $\alpha$. Then
\begin{itemize}
\item $G$ satisfies exponential containment of any rate $\lambda > \alpha$,
\item $G$ does not satisfy exponential containment of any rate $\lambda < \alpha$.
\end{itemize}
\end{thm}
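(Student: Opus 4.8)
The plan is to transfer the threshold result for trees (Theorem~\ref{thm:trees}) to the Cayley graph $G$ via the subperiodic spanning tree $T$ constructed above, using the fact that for subperiodic trees the branching number, growth rate, and here the growth rate $\alpha$ of $G$ all coincide. First I would invoke Lyons' construction to obtain a subperiodic spanning tree $T$ of $G$ rooted at $\id$ with $\gr T = \gr G = \alpha$. By Furstenberg's theorem on subperiodic trees, $\br T = \gr T = \alpha$. Then Theorem~\ref{thm:trees} applied to $T$ gives that $T$ has critical containment rate $\lambda_c(T) = \br T = \alpha$.

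The two bullet points then follow by comparing the firefighting game on $G$ with the game on its spanning tree $T$. For the first bullet (containment of any rate $\lambda > \alpha$), the key observation is that a spanning tree has \emph{fewer} edges than $G$, so the fire can only spread more slowly when we delete the non-tree edges---but this direction is the wrong way, since removing edges helps the firefighter, not the arsonist. I therefore need the opposite comparison: since $T$ is a spanning subgraph, any containment strategy must contend with \emph{at least} the spread present in $T$. The correct formulation is that a containment strategy on $G$ is automatically a strategy that must stop the spread along all of $G$'s edges, which includes those of $T$; conversely a successful strategy for the tree need not succeed on $G$. So the honest route is: for the \emph{upper} direction I want to show a strategy exists on $G$, and for the \emph{lower} direction I want to show no strategy exists on $G$.

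For the lower bound (second bullet, $\lambda < \alpha$), I would argue that since $T$ is a spanning subgraph of $G$, the fire on $G$ dominates the fire on $T$: if the starting set is a ball and we fix any strategy with $f_n = \lfloor \lambda^n \rfloor$, the set of burning vertices in $G$ contains the set of burning vertices in $T$ under the same protection choices, because every $T$-edge is a $G$-edge. Hence a containment strategy for $G$ yields one for $T$ with the same budget, contradicting $\lambda_c(T) = \alpha$ for $\lambda < \alpha$. This gives the second bullet directly. The main obstacle is the first bullet, since here the tree comparison runs the wrong way: deleting edges makes containment on $T$ easier than on $G$, so a winning tree strategy does not transfer. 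To handle $\lambda > \alpha$ I would instead use that $G$ has exponential growth rate $\alpha$, so balls satisfy $|B_r(n)| = O((\alpha')^n)$ for any $\alpha' > \alpha$; the number of vertices at distance exactly $n$ is then $O((\alpha')^n)$, and one can protect an entire sphere $\{v : |v| = n\}$ at a cost within the $\lfloor \lambda^n\rfloor$ budget once $\alpha < \alpha' < \lambda$, thereby sealing off the fire in finitely many rounds.

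I expect the delicate point to be making precise that growth rate $\alpha$ controls sphere sizes uniformly enough to seal a sphere within budget, and confirming that the spanning-tree domination argument for the lower bound respects the ``ball'' starting configuration needed to apply Theorem~\ref{thm:trees} rather than an arbitrary finite fire.
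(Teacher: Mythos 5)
Your proposal is correct and follows essentially the same route as the paper: for $\lambda>\alpha$ it waits until the budget $\lfloor\lambda^n\rfloor$ exceeds the size of the sphere bounding the fire (which grows like $O((\alpha')^n)$ for $\alpha<\alpha'<\lambda$) and seals it in one round, and for $\lambda<\alpha$ it passes to Lyons' subperiodic spanning tree, where $\br T=\gr T=\alpha$ by Furstenberg's theorem, and invokes Theorem~\ref{thm:trees} together with monotonicity of the game under taking spanning subgraphs. The only difference is expository: the paper states the subgraph comparison in one line, whereas you spell out the domination argument and flag (correctly) that the tree comparison only works in the lower-bound direction.
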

\begin{proof}
For the first part consider the following strategy: Note that if the fire initially is contained in a ball with radius $k$ about some vertex, then at step $n$ it will be contained in a ball of radius $k+n$. Wait until $\lambda^n$ is larger than the boundary of this ball, then play all vertices in this boundary at once. This is possible since $\lambda^n$ asymptotically grows quicker than $|B_r(k+n+1)|$ and hence also faster than the boundary of the ball of radius $k+n$.

For the second part note that if $G$ satisfies exponential containment of some rate $\lambda$, then so does every subgraph of $G$. But the subperiodic spanning tree of $G$ with exponential growth rate $\alpha$ does not satisfy exponential containment of any rate $\lambda < \alpha$ by Theorem~\ref{thm:trees}.
\end{proof}

\begin{cor}
A Cayley graph of a group with exponential growth never satisfies polynomial containment.
\end{cor}

\begin{proof}
For any $d,\lambda \in \mathbb R$ we have $n^d = o(\lambda^n)$.
\end{proof}
\bibliographystyle{abbrv}
\bibliography{ref.bib}

\end{document}